\newcommand{\IR}{\mathbb R}
\newcommand{\w}{\omega}
\newcommand{\pr}{\mathrm{pr}}
\newcommand{\e}{\varepsilon}
\newtheorem{theorem}{Theorem}
\newtheorem{lemma}{Lemma}
\newtheorem{claim}{Claim}
\newtheorem{problem}{Problem}
\title[Haar null $F_\sigma$-set]{A Polish group containing a Haar null $F_\sigma$-subgroup\\ that cannot be enlarged to a Haar null $G_\delta$-set}
\author{Taras Banakh}
\begin{document}
 \begin{abstract}
Answering a question of Elekes and Vidny\'anszky, we construct a Polish meta-abelian group $H$ and a subgroup $F\subset H$, which is a Haar null $F_\sigma$-set in $H$ that cannot be enlarged to a Haar null $G_\delta$-set.
\end{abstract}
\subjclass{22A10; 28C10}
\address{Ivan Franko National University of Lviv (Ukraine) and Jan Kochanowski University in Kielce (Poland)}
\email{t.o.banakh@gmail.com}
\maketitle

A Borel subset $B$ of a Polish group $H$ is called {\em Haar null} if there exists a $\sigma$-additive Borel probability measure $\mu$ on $H$ such that $\mu(xBy)=0$. It is well-known \cite{Chris} that a Borel subset of a Polish locally compact group $H$ is Haar null if and only if $B$ has Haar measure zero if and only if $B$ is contained in a Haar null $G_\delta$-subset of $H$. As was shown by Elekes and Vidny\'anszky \cite{EV}, for non-locally compact Polish groups the latter result is not true: each non-locally compact Polish abelian group $H$ contains a Borel Haar null subset $B\subset H$ which is not contained in a Haar null $G_\delta$-set in $H$. However, the construction of the Borel set $B$ exploited in \cite{EV} does not allow to evaluate the Borel complexity of $B$. Because of that, Elekes and Vidny\'anszky asked in \cite{EV} wherther each Haar null $F_\sigma$-subset $B$ of a Polish (Abelian) group $H$ can be enlarged to a Haar null $G_\delta$-set. In this section we partly answer this problem presenting an example of a Polish meta-Abelian group $H$ and a Haar null $F_\sigma$-subgroup $F\subset H$ that cannot be enlarged to a Haar null $G_\delta$-set.

A topological group $H$ is {\em meta-abelian} if $H$ contains a closed normal abelian subgroup $A\subset H$ such that the quotient group $H/A$ is Abelian.

We define a subset $B$ of topological group $H$ to be {\em thick} if for every compact subset $K\subset H$ there are points $x,y\in H$ such that $xKy\subset B$. It is easy to see that a thick Borel subset of Polish groups cannot be Haar null.

\begin{theorem}\label{t:main} There exists a Polish meta-abelian group $H$ containing a subgroup $F\subset H$ such that $F$ is a Haar null $F_\sigma$-set in $H$ but every $G_\delta$-set $G\subset H$ containing $B$ is thick and hence is not Haar null in $H$.
\end{theorem}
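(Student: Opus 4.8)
The plan is to realise $H$ as a semidirect product $H=A\rtimes\mathbb Z$, where $A$ is a Polish abelian group and the generator of $\mathbb Z$ acts on $A$ by a \emph{contracting} automorphism $\sigma$ (so that $\sigma^n(C)$ eventually enters every neighbourhood of $0$, for every compact $C\subseteq A$), and to take $F=A_0\rtimes\mathbb Z$ for a $\sigma$-invariant dense $F_\sigma$-subgroup $A_0\subseteq A$ that is Haar null in $A$. A convenient concrete choice is $A=c_0$, $\sigma(x)=\tfrac12x$, and $A_0=c_{00}$, the subgroup of finitely supported sequences. Then $A$ is a closed normal abelian subgroup of $H$ with $H/A\cong\mathbb Z$ abelian, so $H$ is a Polish meta-abelian group, and it remains to check: (i) $F$ is an $F_\sigma$-subgroup of $H$; (ii) $F$ is Haar null in $H$; (iii) every $G_\delta$-set $G\supseteq F$ is thick.

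Item (i) is immediate: $c_{00}=\bigcup_NV_N$ is an increasing union of the $\sigma$-invariant closed finite-dimensional coordinate subspaces $V_N$, so $F=\bigcup_N(V_N\times\mathbb Z)$ is a countable union of closed subsets of $H$. For (ii) fix a Borel probability measure $\nu$ on $A=c_0$ vanishing on every coset of every $V_N$ — e.g.\ the law of $\sum_N2^{-N}g_Ne_N$ with $(g_N)$ i.i.d.\ standard Gaussian, which converges in $c_0$ almost surely — so that $\nu$ vanishes on every coset of $A_0$; and put $\mu=\nu\times\lambda$ for any fully supported probability measure $\lambda$ on $\mathbb Z$. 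A direct computation in $A\rtimes\mathbb Z$ shows that for $x=(a,r)$, $y=(b,t)$ the set $xFy$ meets the slice $A\times\{j\}$ exactly in the coset $(a+\sigma^jb)+A_0$ (using $\sigma^rA_0=A_0$), whence $\mu(xFy)=\sum_j\lambda(\{j\})\,\nu\bigl((a+\sigma^jb)+A_0\bigr)=0$, so $F$ is Haar null.

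The substance is (iii). Fix a $G_\delta$-set $G\supseteq F$ and a compact $K\subseteq H$; write $H\setminus G=\bigcup_nE_n$ with $E_n$ closed, and since $F$ is dense and disjoint from each $E_n$, every $E_n$ is nowhere dense. Splitting along the clopen slices $A\times\{j\}$ we may assume $E_n=C_n\times\{j_n\}$ with $C_n\subseteq A$ closed, nowhere dense and disjoint from $A_0$; and $K=\bigsqcup_{p\in S}K_p\times\{p\}$ for a finite $S\subseteq\mathbb Z$ and compacta $K_p\subseteq A$. We must find $x=(a,r)$, $y=(b,t)$ with
\[
\bigl(a+\sigma^rK_p+\sigma^{r+p}b\bigr)\cap C_n=\emptyset\qquad\text{for all }n\text{ and all }p\in S\text{ with }r+p+t=j_n .
\]
Two features drive the argument. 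First, fixing the residue $c=r+t$ pins down the finitely many slices $\{c+p:p\in S\}$ in play independently of $r$, and for each of them the set of pairs $(a,b)\in A\times A$ whose centre $a+\sigma^{r+p}b$ avoids $\bigcup\{C_n:j_n=c+p\}$ is comeager in $A\times A$: each $C_n$ is nowhere dense and the maps $(a,b)\mapsto a+\sigma^{r+p}b$ are open with fibres homeomorphic to $A$, so Kuratowski--Ulam applies. Second, since $\sigma$ is contracting, for large $r$ each set $a+\sigma^rK_p+\sigma^{r+p}b$ is a norm-small compact perturbation of its centre; and since $K_p$ is compact in the non-locally-compact $c_0$, for every $\varepsilon>0$ it lies in $V_N+B(0,\varepsilon)$ for some $N$, i.e.\ it is approximable from inside the ($A_0$-)subspace $V_N$. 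Combining these one builds $(a,b)$ coordinate by coordinate and then chooses $r$ large enough, recursively along an enumeration of the relevant pairs $(p,n)$, so that at each stage the (ever tinier) set $a+\sigma^rK_p+\sigma^{r+p}b$ stays inside a neighbourhood of its centre missing $C_n$ without disturbing the finitely many constraints already secured. Then $x=(a,r)$, $y=(b,c-r)$ works.

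The main obstacle is exactly this last synthesis. Because $F$ is Haar null, a $G_\delta$-superset $G$ of $F$ need not contain any open set meeting $F$; concretely the nowhere dense sets $C_n$ may be \emph{dense} in $A\setminus A_0$, so no neighbourhood of a single point avoids all of them simultaneously, and one cannot simply fix the exponent $r$ and invoke Baire category in $a$ alone — the section of the ``good'' set over a fixed $r$ may fail to be comeager. What rescues the argument is the combination of the compactness of $K$ (finite-dimensional approximability, with an error the contraction $\sigma^r$ can absorb) with the freedom to choose the contraction exponent $r$ \emph{after} the coordinates of $a,b$ have been arranged to dodge the countably many relevant closed sets; this extra independent parameter is precisely what the non-abelian semidirect structure supplies, the abelian analogue failing for lack of it. Once (iii) is established, every such $G$ is thick, hence by the remark preceding the statement it is not Haar null, which completes the proof.
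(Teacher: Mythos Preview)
Your argument for (iii) does not go through; in fact the specific group $H=c_0\rtimes_\sigma\mathbb Z$ with $\sigma(x)=\tfrac12x$ and $F=c_{00}\times\mathbb Z$ \emph{fails} the conclusion. Take $K=K_0\times\{0\}$ with the Hilbert cube $K_0=\prod_{n\ge1}[0,1/n]\subset c_0$, and set
\[
G_0=\bigl\{x\in c_0:\ |x_n|<4^{-n}\text{ for infinitely many }n\bigr\}=\bigcap_{m}\bigcup_{n>m}\{x:|x_n|<4^{-n}\},
\qquad G=G_0\times\mathbb Z .
\]
Then $G$ is a $G_\delta$-set containing $F$ (every $x\in c_{00}$ has $x_n=0$ eventually). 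For any $x=(a,r)$, $y=(b,t)$ one computes $xKy=\bigl(a'+2^{-r}K_0\bigr)\times\{r+t\}$ with $a'=a+2^{-r}b$, and $a'+2^{-r}K_0=\prod_n[a'_n,\,a'_n+2^{-r}/n]$. For all sufficiently large $n$ (depending only on $r$) one has $2^{-r}/n>2\cdot4^{-n}$, so the $n$-th interval is not contained in $(-4^{-n},4^{-n})$; choosing $x_n$ in that interval with $|x_n|\ge4^{-n}$ for such $n$, and $x_n=a'_n$ otherwise, produces a point $x\in a'+2^{-r}K_0\subset c_0$ with $|x_n|\ge4^{-n}$ for all large $n$, hence $x\notin G_0$. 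Thus no two-sided translate of $K$ lies in $G$, so $G$ is not thick.

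The underlying problem is structural: the only ``contraction'' available in $c_0\rtimes\mathbb Z$ is the one-parameter family $\sigma^r$, which scales all coordinates by the \emph{same} factor $2^{-r}$. Your recursive sketch needs, for each closed obstacle $C_n$, to make $a+\sigma^rK_p+\sigma^{r+p}b$ miss $C_n$; but the countably many conditions must be met by a \emph{single} integer $r$, and there is no Baire space of contraction parameters in which to run a category argument. Your appeal to building $(a,b)$ ``coordinate by coordinate'' and then choosing $r$ large cannot overcome this, because increasing $r$ also moves the centre $a+\sigma^{r+p}b$, and even with $b=0$ the counterexample above applies.

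The paper's construction avoids this by taking the acting group to be the Polish group $R^*=(\mathbb R\setminus\{0\})^\omega$ acting coordinatewise on $R=\mathbb R^\omega$, with $F=R_0\times R^*$ for $R_0$ the eventually-zero sequences. Here the space of multipliers is itself a Baire space, and for each closed obstacle one shows that the set of multipliers $a\in R^*$ with $aC\cap P_k=\emptyset$ is open \emph{and dense} (density uses that, beyond any finite index, the coordinates of $a$ can be chosen arbitrarily small so that the tail of $aC$ lands near $0\in R_0$). A Baire category intersection over $k$ then yields a single multiplier that dodges all obstacles simultaneously --- precisely the step your one-parameter contraction cannot supply.
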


\begin{proof} Observe that the countable power $R=\IR^\w$ of the real lines has the structure of a unital topological ring and the dense $G_\delta$-set $(\IR\setminus\{0\})^\w$ coincides with the set $R^*$ of invertible elements of the ring $R$. On the product $H=R\times R^*$ consider the binary
operation $\star:H\times H\to H$ defined by the formula
$$(x,a)\star (y,b)=(x+ay,an)\mbox{ \ \ for \ \ } (x,a),(y,b)\in H=R\times R^*.$$
The Polish space $H=R\times R^*$ endowed with this binary operation is a Polish group called the {\em semidirect product} of $R$ and $R^*$.

In the topological ring $R=\IR^\w$ consider the $F_\sigma$-subset $$R_0=\{(x_n)_{n\in\w}\in\IR^\w:\exists n\in\w\;\;\forall m\ge n\;\;x_m=0\}$$ consisting of all eventually zero sequences. It follows that $R_0$ is a subring of $R$ and $F=R_0\times R^*\subset H$ is a subgroup of the Polish group $H$. We claim that the subgroup $F$ has the desired properties.

\begin{lemma} The subset $F$ is Haar null in $H$.
\end{lemma}

\begin{proof} Taking into account that $R_0$ is a Borel non-open subgroup of the Polish Abelian group $R$, we can apply a classical result of Christensen \cite{Chris} and conclude that $R_0$ is Haar null in $R$. This allows us to find a probability measure $\mu$ on $R$ such that $\mu(x+R_0)=0$ for all $x\in R$. Identifying $R$ with the normal subgroup $R\times\{1\}$ of $H$, we can consider the measure $\mu$ as a mesure on $H$. Then for every elements $(x,a),(y,b)\in H$ we get $$\big((x,a)\star F\star(y,b)\big)\cap \big(R\times\{1\}\big)=(x,a)\star \big(R_0\times\{a^{-1}b^{-1}\}\big)\star (y,b)=(x+a\cdot R_0+a^{-1}b^{-1}y,1)$$and hence
$$\mu\big((a,x)\star F\star (y,b)\big)=\mu(x+a\cdot R_0+a^{-1}b^{-1}y)=\mu(x+a^{-1}b^{-1}y+R_0)=0$$by the choice of $\mu$. So, the measure $\mu$ witnesses that the set $F=R_0\times R^*$ is Haar null in $H$.
\end{proof}

\begin{lemma} Every $G_\delta$-set $G\subset H$ containing $F$ is thick.
\end{lemma}

\begin{proof}
Given a $G_\delta$-set $G\subset H$ containing $F$, consider its complement $H\setminus G$ and write it as a countable union $H\setminus G=\bigcup_{k\in\w}E_k$ of an increasing sequence $(E_k)_{k\in\w}$ of closed sets in $H$. To prove that $G$ is thick in $H$, it suffices for every compact subset $K\subset H$ to find an element $h\in H$ such that $hKh^{-1}\subset G$. Given a compact set $K\subset H$ choose  compact sets $C\subset R$ and $C^*\subset R^*$ such that $K\subset C\times C^*$. Observe that for every element $a\in R^*$ and the element $h=(0,a)\in H$ we get $h^{-1}=(0,a^{-1})$ and hence
$$hKh^{-1}\subset (0,a)\star (C\times C^*)\star(0,a^{-1})\subset aC\times C^*.$$
Now it suffices to find an element $a\in R^*$ such that $(aC\times C^*)\cap E_k=\emptyset$ for all $k\in\w$. The compactness of the set $C^*$ guarantees that the projection $\pr:R\times C^*\to R$, $\pr:(x,y)\mapsto x$, is closed and for every $k\in\w$ the set $P_k=\pr(E_k\cap(R\times C^*))$ is closed in $R$ and does not intersect the $F_\sigma$-subgroup $R_0$. Observe that the set $R^*_k=\{a\in R^*:aC\cap P_k=\emptyset\}$ is contained in $\{a\in R^*:(aC\times C^*)\cap E_n\ne\emptyset\}$.
The compactness of the set $C$ implies that the set $R^*_n$ is open in $R^*$.

\begin{claim}\label{cl1} For every $k\in\w$ the set $R^*_k$ is dense in $R^*$.
\end{claim}

\begin{proof} Given any element $a=(a_n)_{n\in\w}\in R=\IR^\w$ and any neighborhood $O_a\subset R$,  find $m\in\w$ such that the closed subspace $\{(b_n)_{n\in\w}\in\IR^\w:\forall n<m\;\;b_n=a_n\}$ is contained in $O_a$. Find a sequence $(C_n)_{n\in\w}$ of compact subsets of the real line such that $C\subset \prod_{n\in\w}C_n$. Consider the compact space $\Pi=\prod_{n<k}a_kC_k$ and observe that the projection $\pi:\Pi\times\IR^{\w\setminus k}\to\IR^{\w\setminus k}$, $\pi:(x,y)\mapsto y$, is a closed map. This implies that the set $\tilde P_k=\pi_k\big((\Pi\times\IR^{\w\setminus k})\cap P_k\big)$ is closed in $\IR^{\w\setminus k}$ and does not contain the constant zero function $z\in\IR^{\w\setminus k}$. By the compactness of the product $C_{\ge k}=\prod_{n\ge k}C_n\subset \IR^{\w\setminus k}$ and the continuity of multiplication in the topological ring $\IR^{\w\setminus k}$, there exists an element $\e\in (\IR\setminus\{0\})^{\w\setminus k}$ so close to zero that $(\e\cdot C_{\ge k})\cap \tilde P_k=\emptyset$. Then for the element $b=(b_n)_{n\in\w}\in O_a$ defined by $b_n=a_n$ for $n<k$ and $b_n=\e_n$ for $n\ge k$
we get $$(b\cdot C)\cap P_n\subset \Big(\prod_{n\in\w}b_nC_n\Big)\cap P_n\subset (\prod_{n<k}a_nC_n)\times \Big(\big(\prod_{n\ge k}\e_nC_n\big)\cap\tilde P_k)=\emptyset$$and hence
 $b\in O_a\cap R^*_k$.
\end{proof}

Claim~\ref{cl1} combined with the Baire Theorem guarantees that $\bigcap_{k\in\w}R_k^*$ is a dense $G_\delta$-set in the Polish space $R^*$. Then we can take any point $a\in \bigcap_{k\in\w}R_k^*$ and conclude that $(aC\times C^*)\cap \bigcup_{k\in\w}E_k=\emptyset$ and hence for the element $h=(0,a)$ we get $hKh^{-1}\subset G$.
\end{proof}
\end{proof}

However, Theorem~\ref{t:main} gives no answer to the following two problems posed by Elekes and Vidny\'anszky in \cite{EV}.

\begin{problem} Is each Haar null $F_\sigma$-subset of an uncountable Polish Abelian group $G$ contained in a Haar null $G_\delta$-subset of $G$?
\end{problem}

\begin{problem}\label{prob2} Is each countable subset of an uncountable Polish group $G$ contained in a Haar null $G_\delta$-subset of $G$?
\end{problem}

By Remark 5.3 \cite{EV} the answer to Problem~\ref{prob2} is affirmative for Polish abelian groups.


\begin{thebibliography}{}

\bibitem{Chris} J.~Christensen, {\em On sets of Haar measure zero in abelian Polish groups}, Israel J. Math. 13 (1972), 255--260 (1973).

\bibitem{EV} M.~Elekes,  Z.~Vidny\'anszky, {\em Haar null sets without $G_\delta$-hulls},  Israel J. Math. 209 (2015) 199--214.

\end{thebibliography}
\end{document}